\documentclass{article}
\usepackage{mathtools,amssymb,amsthm}
\usepackage{listings}
\usepackage[dvipsnames]{xcolor}
\usepackage[colorlinks]{hyperref}
\usepackage{authblk}

\newtheorem{theorem}{Theorem}
\newtheorem{lemma}[theorem]{Lemma}

\title{On the Elementary Symmetric Functions of $\{1,1/2,\dots,1/n\}\backslash\{1/i\}$}
\author[1]{Weilin Zhang}
\author[2]{Hongjian Li}
\author[,1,3]{Sunben Chiu\thanks{Corresponding author. Email: zcb@m.scnu.edu.cn.}}
\author[1]{Pingzhi Yuan}
\affil[1]{School of Mathematical Sciences, South China Normal University, Guangzhou, 510631, P. R. China}
\affil[2]{School of Mathematics and Statistics, Guangdong University of Foreign Studies, Guangzhou 510006, P. R. China}
\affil[3]{Department of Technology, ICBC Software Development Center, Guangzhou 510665, P. R. China}
\date{}

\begin{document}
\maketitle
\begin{abstract}
In 1946, P. Erd\H{o}s and I. Niven proved that there are only finitely many positive integers $n$ for which one or more of the elementary symmetric functions of $1,1 / 2$, $\cdots, 1 / n$ are integers. In 2012, Y. Chen and M. Tang proved that if $n \geqslant 4$, then none of the elementary symmetric functions of $1,1 / 2, \cdots, 1 / n$ are integers. In this paper, we prove that if $n \geqslant 5$, then none of the elementary symmetric functions of $\{1,1 / 2, \cdots, 1 / n\} \backslash\{1 / i\}$ are integers except for $n=i=2$ and $n=i=4$.

\textit{Key words}. harmonic series, elementary symmetric function, Chebyshev function.

2020 \textit{Mathematics Subject Classification}. 11B83, 11B75.
\end{abstract}

\section{Introduction}

In mathematics, the harmonic series is the infinite series formed by summing all positive unit fractions, that is,
\[
\sum_{n=1}^{\infty} \frac{1}{n}=1+\frac{1}{2}+\frac{1}{3}+\frac{1}{4}+\frac{1}{5}+\cdots,
\]
which is a divergent series and plays an important role in analysis. Adding the first $n$ terms of the harmonic series produces a partial sum, called a harmonic number and denoted $H_n$, that is,
\[
H_n=\sum_{i=1}^n \frac{1}{i} .
\]

In number theory, it is well-known that for any integer $n>1$, the harmonic number $H_n$ is not an integer (see \cite[P33]{7}).

Let $T(n,k)$ denote the $k$-th elementary symmetric function of $1,1 / 2,1 / 3, \cdots,\allowbreak 1 / n$. That is,
\[
T(n,k)=\sum_{1 \leqslant i_1<i_2<\cdots<i_k \leqslant n} \frac{1}{i_1 i_2 \cdots i_k},
\]
where $k$ is an integer with $1 \leqslant k \leqslant n$. With this notation, we mention that $T(n,1)=H_n$. In 1946, P. Erd\H{o}s and I. Niven \cite{2} considered the integrality of $T(n,k)$ and proved that there is only a finite number of positive integers $n$ and $k$ with $1 \leqslant k \leqslant n$ such that $T(n,k)$ is an integer. In 2012, Y. Chen and M. Tang \cite{1} proved that $T(n,k)$ is not an integer except for the following two cases:
\[
T(1,1)=1, T(3,2)=1 \times \frac{1}{2}+1 \times \frac{1}{3}+\frac{1}{2} \times \frac{1}{3}=1 .
\]

For more related problem on this topic, we refer the reader to see \cite{3,4,5,6,8,10,11}.

Let $S(n,i,k)$ denote the $k$-th elementary symmetric function of $\{1,1 / 2, \cdots,\allowbreak 1 / n\} \backslash \{1 / i\}$. That is,
\[
S(n,i,k)=\sum_{\substack{1 \leqslant i_1<i_2<\cdots<i_k \leqslant n, i_j \neq i \text { for } \\ j=1,2, \cdots, k}} \frac{1}{i_1 i_2 \cdots i_k},
\]
where $n$, $i$, $k$ are integers with $1 \leqslant k<n$ and $1 \leqslant i \leqslant n$. Thus, for $n=2$, we have
\[
S(2,1,1)=\frac{1}{2},\quad S(2,2,1)=1 .
\]

For $n=3$, we have
\begin{align*}
S(3,1,1)&=\frac{5}{6}, & S(3,1,2)&=\frac{1}{6}, \\
S(3,2,1)&=\frac{4}{3}, & S(3,2,2)&=\frac{1}{3}, \\
S(3,3,1)&=\frac{3}{2}, & S(3,3,2)&=\frac{1}{2} .
\end{align*}

For $n=4$, we have
\begin{align*}
S(4,1,1)&=\frac{13}{12},&  S(4,1,2)&=\frac{3}{8}, & S(4,1,3)&=\frac{1}{24}, \\
S(4,2,1)&=\frac{19}{12},&  S(4,2,2)&=\frac{2}{3},&  S(4,2,3)&=\frac{1}{12}, \\
S(4,3,1)&=\frac{7}{4}, & S(4,3,2)&=\frac{7}{8},&  S(4,3,3)&=\frac{1}{8}, \\
S(4,4,1)&=\frac{11}{6}, & S(4,4,2)&=1,& S(4,4,3)&=\frac{1}{6} . 
\end{align*}

In this paper, we show that $S(2,2,1)=1$ and $S(4,4,2)=1$ are the only cases for which $S(n,i,k)$ is an integer. We state this result as follows.

\begin{theorem}\label{thm:main}
If $n$, $i$ and $k$ are three positive integers with $n \geqslant 2$, $k<n$ and $i \leqslant n$, then $S(n, i, k)$ is not an integer unless $(n, i, k)=(2,2,1)$ or $(4,4,2)$.
\end{theorem}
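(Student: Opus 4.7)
\medskip
\noindent\textbf{Proof plan.}
The small cases $n\le 4$ are settled by direct inspection of the tables of $S(n,i,k)$ listed in the introduction, which isolate $(2,2,1)$ and $(4,4,2)$ as the only integer entries. For the remaining range $n\ge 5$, the plan is to prove non-integrality $p$-adically: for each admissible triple $(n,i,k)$ I would exhibit a prime $p$ with $v_p(S(n,i,k))<0$, which immediately precludes integrality.

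The workhorse is the choice of a prime $p\in(n/2,n]$ with $p\ne i$, guaranteed by Bertrand's postulate whenever $i$ is not the \emph{unique} such prime. For such a $p$, the only element of $\{1,1/2,\dots,1/n\}\setminus\{1/i\}$ whose denominator is divisible by $p$ is $1/p$. Splitting $S(n,i,k)$ by whether a summand contains $1/p$ yields
\[
S(n,i,k)=\frac{1}{p}\,A+B,
\]
where $A$ is the $(k-1)$-th elementary symmetric function of $\{1,1/2,\dots,1/n\}\setminus\{1/i,1/p\}$ and $B$ the $k$-th. Both $A$ and $B$ lie in $\mathbb{Z}_{(p)}$, so $v_p(S(n,i,k))=-1$ the instant $A\not\equiv 0\pmod p$.

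The substantive step is therefore the nonvanishing of $A$ modulo $p$. Reducing mod $p$ replaces $1/j$ by its inverse $j^{-1}\in\mathbb{F}_p^{\times}$, and writing $m=n-p\in\{0,1,\dots,p-1\}$ the polynomial
\[
\prod_{j\in\{1,\dots,n\}\setminus\{i,p\}}(x-j^{-1})\pmod p
\]
factors, via $\prod_{r=1}^{p-1}(x-r)\equiv x^{p-1}-1$, as an explicit quotient of $x^{p-1}-1$ times the small factor $\prod_{s=1}^{m}(x-s^{-1})$, with the deleted index $i$ producing the appropriate modification according as $i\le p-1$ or $i>p$. Expanding $(x^{p-1}-1)/(x-c)=\sum_{s=0}^{p-2}c^{s}x^{p-2-s}$ in $\mathbb{F}_p$ and convolving with the small factor yields a closed form for the coefficient of $x^{n-k-1}$, which up to sign is $A\bmod p$; I would verify that this coefficient is nonzero in all but a handful of triples, those triples being settled by swapping $p$ for a different prime in $(n/2,n]$.

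The main obstacle I anticipate is the case when $i$ itself is the unique prime in $(n/2,n]$, so that no admissible $p$ exists in this interval. Here the Chebyshev function estimates flagged in the keywords would be brought in to show that there are at least two primes in $(n/2,n]$ for every sufficiently large $n$, reducing the bad configuration to finitely many pairs $(n,i)$; those exceptional pairs can be dealt with either by direct computation or by falling back on a smaller prime $q\le n/2$ and carefully tracking the higher-order $q$-adic contributions coming from the several multiples of $q$ in $\{1,\dots,n\}$.
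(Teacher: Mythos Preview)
Your plan differs substantially from the paper's and contains a real gap. The paper does \emph{not} work with a prime in $(n/2,n]$. Its Lemma~\ref{lem:max((k+2)(k+3)/2,3k+8)} instead picks a prime $p\in\bigl(n/(k+3),\,n/(k+1)\bigr]$, so that $\{1,\dots,n\}$ contains exactly $k{+}1$ or $k{+}2$ multiples of $p$; the dominant $p$-adic part of $S(n,i,k)$ is then $p^{-k}$ times $T(\lfloor n/p\rfloor,k)$ or $S(\lfloor n/p\rfloor,i',k)$, a symmetric function of only $k{+}1$ or $k{+}2$ reciprocals whose numerator is checked to be prime to $p$ by the single inequality $p>\max\{(k{+}2)(k{+}3)/2,\,3k{+}8\}$. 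Together with Lemma~\ref{lem:2.3} (which eliminates $k\ge e\ln n+e$) and the Chebyshev bounds of Lemma~\ref{lem:1429}, this settles all $n\ge 50217$; the range $n\le 50216$ is closed by computer, including a direct evaluation of every $S(n,i,k)$ for $n\le 13542$.

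The gap in your plan is the claim that $A\not\equiv 0\pmod p$ ``in all but a handful of triples.'' That is false, and the failures are systematic rather than sporadic. Take $n=p+1$ and $i=1$: then $\{j^{-1}\bmod p:\,2\le j\le n,\ j\neq p\}$ is exactly $\mathbb{F}_p^{\times}$, so $\prod_{j\ne 1,p}(x-j^{-1})\equiv x^{p-1}-1$ and the coefficient of $x^{k-1}$ vanishes for \emph{every} $k\in\{2,\dots,p-1\}$. Likewise, for $i>p$ your own factorisation gives $(x^{p-1}-1)\cdot\prod_{s\ne i-p}(x-s^{-1})$, whose coefficient of $x^{k-1}$ is zero for all $k$ with $n-p<k<p$, i.e.\ for most of the range. ``Swapping $p$ for a different prime in $(n/2,n]$'' need not help and need not even be possible: for $n=6$ the only such prime is $5$, and with $(n,i,k)=(6,1,2)$ one finds $A=\tfrac12+\tfrac13+\tfrac14+\tfrac16=\tfrac54$, so $v_5(S(6,1,2))\ge 0$ and the argument yields nothing; for $(n,i,k)=(8,8,4)$ both primes $5$ and $7$ in $(4,8]$ give $A\equiv 0$. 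The fallback you sketch---dropping to a prime $q\le n/2$ with several multiples and tracking higher $q$-adic terms---is exactly the mechanism of Lemma~\ref{lem:max((k+2)(k+3)/2,3k+8)}, and carrying it out would still leave a large finite range (the paper needs $n\le 13542$) to be handled by direct computation, something your plan does not acknowledge.
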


The paper is organized as follows. In section \ref{sec:2}, we show several lemmas which are needed for the proof of Theorem \ref{thm:main}. In section \ref{sec:3}, we give the proof of Theorem \ref{thm:main} by using the approach developed in \cite{1,10}.

As usual, we let $\lfloor x\rfloor$ denote the integer part of the real number $x$ and we let $\theta$ be the Chebyshev function. Let $v_p$ denote the $p$-adic valuation on the field $\mathbb Q$ of rational numbers, i.e., $v_p(a)=b$ if $p^b$ divides $a$ and $p^{b+1}$ does not divide $a$.

\section{Technical lemmas}
\label{sec:2}

For our proof of Theorem \ref{thm:main}, we need the following three lemmas.

\begin{lemma}[see {\cite[P359]{9}}]\label{lem:1429}
For $x\ge 1429$, we have
\[
x-\frac{0.334 x}{\ln x}<\theta(x)<x+\frac{0.021 x}{\ln x} .
\]
\end{lemma}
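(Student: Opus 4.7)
The plan is to establish this explicit estimate on $\theta(x)$ by the classical Rosser--Schoenfeld strategy: first obtain a bound on the Chebyshev $\psi$-function via the explicit formula and a verified zero-free region for the Riemann zeta function, then transfer back to $\theta$, and finally handle the residual range by direct computation.

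First I would work with $\psi(x)=\sum_{p^m\le x}\ln p$ rather than $\theta$ itself, because $\psi$ admits the Riemann--von Mangoldt explicit formula
\[
\psi(x) \;=\; x \;-\; \sum_{\rho}\frac{x^{\rho}}{\rho} \;-\; \ln(2\pi) \;-\; \tfrac{1}{2}\ln\!\bigl(1-x^{-2}\bigr),
\]
where $\rho$ runs over the non-trivial zeros of $\zeta$. The decisive step is to bound the sum over zeros explicitly. This requires two pieces of numerical input: an explicit zero-free region of the form $\operatorname{Re} s \ge 1 - 1/(R\ln|\operatorname{Im} s|)$ for $|\operatorname{Im} s|\ge t_0$, and a computer-verified partial Riemann Hypothesis ensuring that every zero with $|\operatorname{Im}\rho|\le T_0$ lies on the critical line. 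Coupled with an explicit form of the zero-counting function $N(T)=\tfrac{T}{2\pi}\ln\tfrac{T}{2\pi e}+O(\ln T)$, these yield an estimate $|\psi(x)-x|\le c\,x/\ln x$ with $c$ strictly smaller than $0.021$ on the upper side and $0.334$ on the lower side, valid for all $x$ beyond some effective threshold $X_{0}$.

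Next I would pass from $\psi$ to $\theta$ via the identity
\[
\theta(x) \;=\; \psi(x) \;-\; \sum_{m\ge 2}\theta\bigl(x^{1/m}\bigr),
\]
the correction being $O(\sqrt{x}\ln x)$ by the trivial bound $\theta(y)\le 2y$; this is absorbed into the main error term with only a negligible effect on the constants. For the remaining range $1429\le x<X_{0}$, I would verify the two inequalities by direct enumeration of primes: since $\theta$ is a step function that jumps by $\ln p$ at each prime $p$, one tabulates the running sum $\theta(p_k)$ and tests both inequalities at every jump (the lower bound is tightest just before a prime and the upper bound just after). The choice of starting point $x=1429$ is calibrated precisely so that the constants $0.334$ and $0.021$ just hold down to this value; any smaller constant would force a much larger threshold.

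The principal obstacle is the first step: producing a fully verified, explicit bound on $\sum_{\rho}x^{\rho}/\rho$. It rests on heavy numerical analysis, both to establish the explicit zero-free region by estimates on $|\zeta(s)|$ and $|\zeta'/\zeta(s)|$ near $\operatorname{Re} s=1$, and to check the Riemann Hypothesis up to a sufficient height $T_{0}$ using, e.g., the Riemann--Siegel formula with rigorous interval arithmetic. The sharp constants $0.334$ and $0.021$ then emerge only after careful optimization of the contour integrals and the truncation of the zero sum. Since this is precisely the content of the cited reference \cite{9}, the strategy here is merely to outline the provenance of the bound, and we take the lemma as established for the subsequent applications in Section~\ref{sec:3}.
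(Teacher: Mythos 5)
The paper offers no proof of this lemma---it is quoted verbatim from the cited reference \cite{9}---and your proposal correctly recognizes this, giving a faithful outline of the standard Rosser--Schoenfeld-style argument (explicit formula for $\psi$, zero-free region plus partial verification of the Riemann Hypothesis, transfer from $\psi$ to $\theta$, and finite computation on the residual range) before deferring the numerical heavy lifting to the reference. Since both you and the paper ultimately take the bound as an imported result, there is nothing further to compare.
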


\begin{lemma}\label{lem:max((k+2)(k+3)/2,3k+8)}
Let $k$ and $n$ be positive integers such that $1 \leqslant k<n$. Suppose that there exists a prime $p>\max\{(k+2)(k+3)/2, 3k+8\}$ satisfying that
\[
\frac{n}{k+3}<p \leqslant \frac{n}{k+1} .
\]
Then $S(n, i, k)$ is not an integer for every $i=1,2, \cdots, n$.
\end{lemma}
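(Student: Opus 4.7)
The plan is to carry out a $p$-adic valuation argument, in the spirit of Erd\H{o}s--Niven and Chen--Tang, to show that $v_p(S(n,i,k)) = -k$ under the hypotheses; since $k \ge 1$ this is negative, and $S(n,i,k)$ cannot be an integer.

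Set $m = \lfloor n/p \rfloor$. The condition $n/(k+3) < p \le n/(k+1)$ forces $m \in \{k+1, k+2\}$, and since $n/p < m+1 \le k+3 < p$ (using $p > 3k+8 \ge k+3$), we also get $p^2 > n$, so no integer in $\{1,\dots,n\}$ is divisible by $p^2$. Hence the multiples of $p$ in $\{1,\dots,n\}$ form the set $P = \{p, 2p, \dots, mp\}$, and each has $v_p = 1$. I will decompose
\[ S(n, i, k) = \sum_{\ell = 0}^{k} S_\ell, \]
where $S_\ell$ collects the summands whose index set meets $P$ in exactly $\ell$ elements. Each individual term of $S_\ell$ has $v_p = -\ell$, so $v_p(S_\ell) \ge -\ell$, and therefore $v_p\bigl(\sum_{\ell<k} S_\ell\bigr) \ge -(k-1) > -k$. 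It thus suffices to show $v_p(S_k) = -k$.

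To compute $S_k$ I would split into four subcases depending on whether $i \in P$ and whether $m = k+1$ or $m = k+2$. In each subcase $p^k S_k$ is an elementary symmetric function of the form $e_k(\{1/\ell : 1 \le \ell \le m,\, \ell \ne j\})$, with $j = i/p$ if $i \in P$ and no index removed otherwise; its denominator divides $2(k+2)!$ and is coprime to $p$ because $p > k+2 > 2$. The three easier subcases produce numerators proportional to $k+2$, $j$, and $(k+3)(3k+8)$ respectively, none divisible by $p$ since $p > 3k+8 > k+2 \ge j$.

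The main obstacle is the remaining subcase $i = jp \in P$ with $m = k+2$. I would use the identity $e_k(x_1,\dots,x_{k+1}) = (\prod_\ell x_\ell)\, e_1(1/x_1,\dots,1/x_{k+1})$ on the $(k+1)$-element set $\{1/\ell\}_{\ell \ne j}$ to produce a numerator of $j\bigl((k+2)(k+3) - 2j\bigr)$. The factor $j$ is coprime to $p$ as before. The factor $(k+2)(k+3) - 2j$ is a positive even integer (since $(k+2)(k+3)$ is a product of two consecutive integers); the hypothesis $p > (k+2)(k+3)/2$ forces $0 < (k+2)(k+3) - 2j < 2p$, so if $p$ divided it the quotient would have to be $1$. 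But $p > 3k+8 \ge 11$ is odd, contradicting the evenness. This will establish $v_p(S_k) = -k$ in every subcase, completing the proof.
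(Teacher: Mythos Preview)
Your argument is correct and follows essentially the same $p$-adic strategy as the paper: isolate the terms whose $k$ indices are all multiples of $p$, identify that piece with $\frac{1}{p^k}T(m,k)$ (if $p\nmid i$) or $\frac{1}{p^k}S(m,i',k)$ (if $i=i'p$) for $m=\lfloor n/p\rfloor\in\{k+1,k+2\}$, and check via the explicit formulas that this piece has $v_p$ exactly $-k$. One small simplification: in your hardest subcase you can avoid the parity trick entirely, since $(k+2)(k+3)/2 - j$ is already a positive integer strictly less than $(k+2)(k+3)/2 < p$, so it is automatically coprime to $p$.
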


\begin{proof}
First of all, by the assumption, there is a prime $p>\max\{(k+2)(k+3)/2, 3k+8\}$ such that
\[
\frac{n}{k+3}<p \leqslant \frac{n}{k+1},
\]
that is,
\[
k+1 \leqslant \frac{n}{p}<k+3 .
\]
Thus, $\lfloor n/p\rfloor=k+1$ or $k+2$.

Let $\mathcal{S}_p(n)$ denote the set of all integers from 1 to $n$ that can be divisible by $p$, that is,
\[
\mathcal{S}_p(n)=\bigg\{p, 2 p, 3 p, \cdots,\bigg\lfloor\frac{n}{p}\bigg\rfloor \cdot p\bigg\} .
\]

Now we distinguish two cases as follows.

\textsc{Case 1}: $i \notin \mathcal{S}_p(n)$.

Since $p \nmid i$, we can separate the terms in the sum $S(n, i, k)$ into two parts, depending on whether the denominator is the product of $k$ integers in $\mathcal{S}_p(n)$. That is,
\begin{align*}
S(n, i, k)  &=\sum_{\substack{i_1, i_2, \cdots, i_k \in \mathcal{S}_p(n),\\ i_1<i_2<\cdots<i_k}} \frac{1}{i_1 i_2 \cdots i_k}+\sum_{\substack{1 \leqslant i_1<i_2<\cdots<i_k \leqslant n,\\ \forall i_j \neq i
\text { and } \exists i_t \notin \mathcal{S}_p(n)}} \frac{1}{i_1 i_2 \cdots i_k} \\
 &=\frac{1}{p^k} T\bigg(\bigg\lfloor\frac{n}{p}\bigg\rfloor,k\bigg)+\frac{b}{p^{k-1} c},
\end{align*}
where $b$ and $c$ are two positive integers with $p \nmid c$. Since $p>3k+8$ and
\[
\begin{gathered}
T(k+1, k)=\frac{1}{(k+1) !}\bigg(\sum_{1 \leqslant j \leqslant k+1} j\bigg)=\frac{k+2}{2k !}, \\
T(k+2, k)=\frac{1}{(k+2) !}\bigg(\sum_{1 \leqslant j<s \leqslant k+2} j s\bigg)=\frac{(k+3)(3 k+8)}{24k !} .
\end{gathered}
\]

Hence $v_p(T(\lfloor n/p\rfloor,k))=0$, say
\[
T\bigg(\bigg\lfloor\frac{n}{p}\bigg\rfloor,k\bigg)=\frac{d}{a} \text { for } a, d \in \mathbb{Z} \text { and } p \nmid a d .
\]

Thus
\[
S(n, i, k)=\frac{1}{p^k} T\bigg(\bigg\lfloor\frac{n}{p}\bigg\rfloor,k\bigg)+\frac{b}{p^{k-1} c}=\frac{d c+p a b}{p^k a c}
\]
is not an integer since $v_p(S(n, i, k))=-k<0$. This completes the proof of \textsc{Case 1}.

\textsc{Case 2}: $i \in \mathcal{S}_p(n)$.

In this case, we say $i=p i'$ for some $i' \in\{1,2, \cdots,\lfloor n/p\rfloor\}$. We can also separate the terms in the sum $S(n, i, k)$ into two parts, depending on whether the denominator is the product of $k$ integers in $\mathcal{S}_p(n) \backslash\{i\}$. That is,
\begin{align*}
S(n, i, k)  &=\sum_{\substack{i_1, i_2, \cdots, i_k \in \mathcal{S}_{\mathcal{P}}(n) \backslash\{i\},\\ i_1<i_2<\cdots<i_k }} \frac{1}{i_1 i_2 \cdots i_k}+\sum_{\substack{1 \leqslant i_1<i_2<\cdots<i_k \leqslant n,\\ \forall i_j \neq i \text { and } \exists i_t \notin \mathcal{S}_p(n) \backslash\{i\}}} \frac{1}{i_1 i_2 \cdots i_k} \\
 &=\frac{1}{p^k} S\bigg(\bigg\lfloor\frac{n}{p}\bigg\rfloor,i',k\bigg)+\frac{b}{p^{k-1} c},
\end{align*}
where $b$ and $c$ are two positive integers with $p \nmid c$. Since $p>(k+2)(k+3)/2$ and
\begin{gather*}
S(k+1, i', k)=\frac{1}{(k+1) ! / i'}, \\
S(k+2, i', k)=\frac{1}{(k+2) ! / i'}\bigg(-i'+\sum_{1\leq j\leq k+2} j\bigg)=\frac{(k+2)(k+3)/2-i'}{(k+2) ! / i'} .
\end{gather*}

Hence $v_p(S(\lfloor n/p\rfloor,i',k))=0$, say
\[
S\bigg(\bigg\lfloor\frac{n}{p}\bigg\rfloor,i',k\bigg)=\frac{d}{a} \text { for } a, d \in \mathbb{Z} \text { and } p \nmid a d .
\]

Thus
\[
S(n, i, k)=\frac{1}{p^k} S\bigg(\bigg\lfloor\frac{n}{p}\bigg\rfloor,i',k\bigg)+\frac{b}{p^{k-1} c}=\frac{d c+p a b}{p^k a c}
\]
is not an integer since $v_p(S(n, i, k))=-k<0$. This completes the proof.
\end{proof}

\begin{lemma}\label{lem:2.3}
If $k$ and $n$ are two integers with $n\geqslant 9$ and $n>k \geqslant e \ln n+e$, then $S(n, i, k)$ is not an integer for every $i=1,2, \cdots, n$.
\end{lemma}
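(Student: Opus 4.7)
My plan is to show that the hypotheses force $0 < S(n,i,k) < 1$, which immediately precludes $S(n,i,k)$ from being an integer; no $p$-adic valuation argument is needed here. The key observation is that $k$ grows at least like $e\ln n$, which already makes $T(n,k)$ strictly less than $1$.

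First I would use the standard bound $H_n \leq 1 + \ln n$ together with the hypothesis $k \geq e\ln n + e = e(1+\ln n)$ to obtain $H_n \leq k/e$. Next, expanding
\[
k!\,T(n,k) = \sum_{\substack{i_1,\ldots,i_k \in \{1,\ldots,n\}\\ i_1,\ldots,i_k \text{ pairwise distinct}}} \frac{1}{i_1\cdots i_k} \leq (H_n)^k
\]
yields $T(n,k) \leq (H_n)^k/k!$. Combining this with the elementary inequality $k! > (k/e)^k$, which follows by extracting the single term $k^k/k!$ from the series $e^k = \sum_{j\geq 0} k^j/j!$, I would conclude
\[
T(n,k) < \left(\frac{e H_n}{k}\right)^k \leq 1.
\]

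To transfer this bound to $S(n,i,k)$, I would invoke the identity
\[
T(n,k) = S(n,i,k) + \frac{1}{i}\,S(n,i,k-1),
\]
obtained by separating $k$-subsets of $\{1,\ldots,n\}$ according to whether they contain $i$ (with the convention $S(n,i,0)=1$). Both summands are nonnegative, so $S(n,i,k) \leq T(n,k) < 1$; on the other hand, the hypothesis $n > k$ ensures that $\{1,\ldots,n\}\setminus\{i\}$ has at least $k$ elements, so the defining sum for $S(n,i,k)$ is nonempty and strictly positive. Hence $S(n,i,k) \in (0,1)$ and cannot be an integer.

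The only points requiring care are the strictness of $H_n \leq 1+\ln n$ (which holds for $n \geq 2$, well within $n \geq 9$) and of $k! > (k/e)^k$ (strict for every $k \geq 1$); both are routine. Unlike Lemma \ref{lem:max((k+2)(k+3)/2,3k+8)}, which relies on producing a specific prime in a short interval, this lemma is essentially an analytic size estimate, and I do not anticipate a substantive obstacle.
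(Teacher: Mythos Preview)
Your proposal is correct and follows essentially the same approach as the paper: both show $T(n,k)<1$ under the hypothesis $k\geqslant e\ln n+e$ and then use the identity $T(n,k)=S(n,i,k)+\tfrac{1}{i}S(n,i,k-1)$ to conclude $0<S(n,i,k)<1$. The only difference is that the paper cites the bound $T(n,k)<1$ from Chen--Tang \cite{1}, whereas you reprove it inline via $T(n,k)\leqslant H_n^k/k!<(eH_n/k)^k\leqslant 1$; this is precisely the argument behind the cited lemma, so your version is simply self-contained rather than a different route.
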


\begin{proof}
Proof. By Lemma 3 in \cite{1}, we have $T(n,k)<1$ whenever $e \ln n+e \leqslant k<n$. Note that
\[
T(n,k)=S(n, i, k)+\frac{1}{i} S(n,i,k-1)\text{ for }i=1,2, \cdots, n
\]
Thus, $S(n, i, k)<T(n,k)<1$. It follows that $S(n, i, k)$ is not an integer.
\end{proof}

\section{Proofs}
\label{sec:3}

In this section, we give the proof of Theorem \ref{thm:main}.

\begin{proof}[Proof of Theorem \ref{thm:main}]
By Lemma \ref{lem:2.3}, we may assume that $1 \leqslant k<e \ln n+e$. Now we distinguish three cases as follows.

\textsc{Case 1}: $n\geqslant 50217$.

Since $n\geqslant 50217$, we have $\frac{n}{k+1}>\frac{n}{k+3}>\frac{n}{e \ln n+e+3} \geqslant 1429$. By Lemma \ref{lem:1429}, we get
\begin{align*}
\theta\bigg(\frac{n}{k+1}\bigg)-\theta\bigg(\frac{n}{k+3}\bigg)
& >\frac{n}{k+1}-\frac{0.334 \cdot \frac{n}{k+1}}{\ln \big(\frac{n}{k+1}\big)}-\frac{n}{k+3}-\frac{0.021 \cdot \frac{n}{k+3}}{\ln \big(\frac{n}{k+3}\big)} \\
& >\frac{n}{k+1}\bigg(\frac{2}{k+3}-\frac{0.355}{\ln \big(\frac{n}{k+3}\big)}\bigg) \\
& >\frac{n}{k+1}\bigg(\frac{2}{e \ln n+e+3}-\frac{0.355}{\ln \big(\frac{n}{e \ln n+e+3}\big)}\bigg) \\
& >0. \tag {whenever $n\geqslant 50217$}
\end{align*}
Hence there is a prime $p$ such that $\frac{n}{k+3}<p \leqslant \frac{n}{k+1}$ as desired.

Since $n\geqslant 50217$, it follows that $n>(e \ln n+e+3)(3e \ln n+3e+8)>(k+3)(3k+8)$ and $n>(e \ln n+e+2)(e \ln n+e+3)^2/2>(k+2)(k+3)^2/2$. Thus, we have $p>\frac{n}{k+3}>\max\{(k+2)(k+3)/2, 3k+8\}$. The proof of \textsc{Case} 1 is completed by Lemma \ref{lem:max((k+2)(k+3)/2,3k+8)} immediately.

\textsc{Case 2}: $13543 \leqslant n \leqslant 50216$. After computer verification\footnote{The details can be found at \url{https://github.com/zsben2/esf/tree/main/python}.}, for any $1\leqslant k< e\ln n + e$, there is a prime $p$, such that $\frac{n}{k+3}<p\leqslant \frac{n}{k+1}$ and $p>\max\{(k+2)(k+3)/2, 3k+8\}$.

\textsc{Case 3}: $2 \leqslant n \leqslant 13542$. Denote that $N = 13542$, $K=\lfloor e\ln N + e\rfloor=28$.

Notice that we have the following recursive formulae for $T(n,k)$:
\begin{align*}
T(1,1)&=1;&&\\
T(n,1)&=T(n-1,1)+\frac{1}{n},&& \text{for } 2 \leqslant n\leqslant N;\\
T(n,k)&=T(n-1,k)+\frac{1}{n} T(n-1,k-1), &&\text {for } 2\leqslant k\leqslant K,\ k<n\leqslant N;
\shortintertext{and}
T(n,n)&=\frac{1}{n} T(n-1,n-1),&& \text{for } 2 \leqslant n\leqslant N.
\end{align*}

Also, let $S(1,1,1):=0$, then we have the following recursive formulae for $S(n, i, k)$:
\begin{align*}
S(n,i,1)&=\begin{dcases}
S(n-1,i,1)+\frac{1}{n}, & \text { if } 1\leqslant i<n, \\
T(n-1,1), & \text { if } i=n,
\end{dcases}&&\text{for }2\leqslant n\leqslant N;\\
S(n, i, k)&=T(n,k)-\frac{1}{i} S(n,i,k-1) ,
\tag*{for $2\leqslant n\leqslant N,\ 1\leqslant i\leqslant n,\ 2\leqslant k<\min(n, e \ln n + e)$.}
\end{align*}
In particular, we have
\[
S(n,n,k)=T(n-1,k),\quad\text{for }2\leqslant n\leqslant N,\ 1\leqslant k<\min(n, e \ln n + e).
\]

Using computer\footnote{A sample pseudocode can be found in the appendix \ref{app:Pseudocode}, and the details can be found at \url{https://github.com/zsben2/esf/tree/main/c}.}, we can verify that $S(n, i, k)$ is not an integer for $2 \leqslant n \leqslant 13542$ except for $S(2,2,1) = 1$ and $S(4,4,2) = 1$. This completes the proof of Theorem \ref{thm:main}.
\end{proof}

The experiment is conducted mainly using a Windows 10 (64-bit) system with an Intel (R) Core (TM) i5-8600 CPU @ 3.10 GHz and 16 GB of RAM. The experimental programming language is C (gcc version 8.1.0), using GMP library (version 6.3.0), a GNU Multiple Precision arithmetic library, as the big number calculateion framework. The running time of the experiment under single process is about 540 hours.

\section*{Acknowledgements}

Thanks to Jiayi Zhao from Peking University for renting 4 computers to us, so that we were able to improve the experiment into 6 parallel jobs and run them within 6 days. Pingzhi Yuan is supported by the National Natural Science Foundation of China (Grant No. 12171163).

\bibliographystyle{plain}
\bibliography{refs}

\begin{thebibliography}{10}

\bibitem{1}
Yong-Gao Chen and Min Tang.
\newblock On the elementary symmetric functions of $1, 1/ 2,\dots, 1/ n$.
\newblock {\em The American Mathematical Monthly}, 119(10):862--867, 2012.

\bibitem{2}
Paul Erd\H{o}s and Ivan Niven.
\newblock Some properties of partial sums of the harmonic series.
\newblock {\em Bulletin of the American Mathematical Society}, 52(4):248--251, 1946.

\bibitem{3}
YL~Feng, SF~Hong, X~Jiang, and QY~Yin.
\newblock A generalization of a theorem of {Nagell}.
\newblock {\em Acta Mathematica Hungarica}, 157:522--536, 2019.

\bibitem{4}
S~Hong and C~Wang.
\newblock The elementary symmetric functions of reciprocals of elements of arithmetic progressions.
\newblock {\em Acta Mathematica Hungarica}, 144(1):196--211, 2014.

\bibitem{5}
Xiao Jiang and Shaofang Hong.
\newblock A generalisation of a theorem of {Erd\H{o}s} and {Niven}.
\newblock {\em Bulletin of the Australian Mathematical Society}, 106(2):215--223, 2022.

\bibitem{6}
Yuanyuan Luo, Shaofang Hong, Guoyou Qian, and Chunlin Wang.
\newblock The elementary symmetric functions of a reciprocal polynomial sequence.
\newblock {\em Comptes Rendus. Math{\'e}matique}, 352(4):269--272, 2014.

\bibitem{7}
Melvyn~B Nathanson.
\newblock {\em Elementary methods in number theory}, volume 195.
\newblock Springer Science \& Business Media, New York, 2000.

\bibitem{8}
Khodabakhsh~Hessami Pilehrood, Tatiana~Hessami Pilehrood, and Roberto Tauraso.
\newblock Multiple harmonic sums and multiple harmonic star sums are (nearly) never integers.
\newblock {\em arXiv preprint arXiv:1606.05722}, 2016.

\bibitem{9}
Lowell Schoenfeld.
\newblock Sharper bounds for the {Chebyshev} functions $\theta(x)$ and $\psi(x)$. {II}.
\newblock {\em Mathematics of computation}, 30(134):337--360, 1976.

\bibitem{10}
Chunlin Wang and Shaofang Hong.
\newblock On the integrality of the elementary symmetric functions of $1, 1/3,\dots, 1/(2n- 1)$.
\newblock {\em Mathematica Slovaca}, 65(5):957--962, 2015.

\bibitem{11}
Qiuyu Yin, Shaofang Hong, Liping Yang, and Min Qiu.
\newblock Multiple reciprocal sums and multiple reciprocal star sums of polynomials are almost never integers.
\newblock {\em Journal of Number Theory}, 195:269--292, 2019.

\end{thebibliography}

\appendix

\section{Pseudocode}
\label{app:Pseudocode}

First, we have
\begin{equation}
T(n,1) = \frac1n + T(n-1,1),
\end{equation}
for $2 \le n \le N$.

Because there are formulas
\[
T(k,k) = \frac1k T(k-1,k-1),
\]
for $2 \le k \le K$,
\[
T(n,k) = \frac1n T(n-1,k-1) + T(n-1,k),
\]
for $2 \le k \le K$, $k < n \le N$, and when initializing variable \verb|T1|, there is
\[
T(k-1,k) = 0
\]
for $2 \le k \le K$, i.e. $T(1,2) = T(2,3) = \cdots = T(K-1,K) = 0$. So we have
\begin{equation}
T(n,k) = \frac1n T(n-1,k-1) + T(n-1,k)
\end{equation}
for $2 \le k \le K$, $k \le n \le N$.

In addition, the following formulas hold:
\begin{equation}
S(n,i,1) = S(n-1,i,1) + \frac1n
\end{equation}
for $2 <= n <= N$, $1 <= i < n$;
\begin{equation}
S(n,i,k) = T(n,k) - \frac1i S(n,i,k-1) \\
\end{equation}
for $2 \le n \le N$, $1 \le i < n$, $2 \le k < \min\{n, e \ln n + e\}$;
\begin{equation}
S(n,n,k) = T(n-1,k).
\end{equation}
for $2 \le n \le N$, $1 \le k < \min\{n, e \ln n + e\}$.

In the following pseudocode, \verb|r[n]| represents $\frac1n$ for $1\le n \le N$. \verb|S1_1[i]| and \verb|Snik| represent $S(n-1,i,1)$ and $S(n,i,k)$ respectively, for $2 \le n \le N$, $1 \le i \le n$, $1 \le k < \min\{n, e \ln n + e\}$. \verb|T1[k]| and \verb|T2[k]| represent $T(n-1,k)$ and $T(n,k)$ respectively, for $1 \le n \le N$, $1 \le k \le \min(n, K)$.

\begin{lstlisting}[
  language=C,
  basicstyle=\linespread{0.6}\ttfamily\scriptsize,
  keywordstyle=\bfseries\color{NavyBlue},
  commentstyle=\itshape\color{black!50!white},
  numbers=left,
  emph={[1]{zeros,min,print_if_integer,log_e,check_Snik}},
  emphstyle={[1]{\bfseries\color{Rhodamine}}},
  emph={[2]{N,K,e}},
  emphstyle={[2]{\bfseries\color{orange}}},
]
void check_Snik() {
    r = zeros(N + 1); // reciprocal
    S1_1 = zeros(N + 1); // save S(n-1, i, 1)
    T1 = zeros(K + 1); // save T(n-1, k)
    T2 = zeros(K + 1); // save T(n, k)

    r[1] = 1;
    T1[1] = 1;
    for (n = 2; n <= N; n++) {
        r[n] = 1 / n;
        // k = 1
eq1:    T2[1] = r[n] + T1[1];
        // k >= 2
        for (k = 2; k <= min(n, K); k++)
eq2:        T2[k] = T1[k-1] * r[n] + T1[k];

        max_k = (int) min(n - 1, e * log_e(n) + e); // <= K
        // 1 <= i < n
        for (i = 1; i < n; i++) {
            // k = 1
eq3:        Snik = S1_1[i] + r[n];
            print_if_integer(Snik);
            S1_1[i] = Snik; // save S(n, i, 1)

            // 2 <= k <= max_k
            for (k = 2; k <= max_k; k++) {
eq4:            Snik = T2[k] - Snik * r[i];
                print_if_integer(Snik);
            }
        }
        // i = n, 1 <= k <= max_k
        for (k = 1; k <= max_k; k++) {
eq5:        Snik = T1[k];
            print_if_integer(Snik);
        }
        S1_1[n] = T1[1]; // save S(n, n, 1)

        // Rolling to use T1, T2 memory
        T3 = T1;
        T1 = T2;
        T2 = T3;
    }
}
\end{lstlisting}

\end{document}